\documentclass[11pt]{amsart}

\usepackage{amsmath}
\usepackage[font=small]{caption}
\usepackage{amsthm}
\usepackage{amsfonts}
\usepackage[pdftex]{graphicx}
\usepackage[margin=.5cm, format=hang]{subfig}
\usepackage[alphabetic]{amsrefs}
\usepackage{amssymb}
\usepackage{tikz-cd}
\usepackage{pinlabel}
\usepackage{morefloats}
\usepackage{enumitem}
\usepackage{fullpage}
\usepackage{array}
\usepackage{multirow}
\usepackage{sidecap}
\usepackage{hhline}
\usepackage{multirow}
\usepackage{blkarray}
\usepackage{xcolor}
\usepackage{comment}

\theoremstyle{plain}
\newtheorem{theorem}{Theorem}[section]
\newtheorem{proposition}[theorem]{Proposition}
\newtheorem{lemma}[theorem]{Lemma}

\newtheorem{conjecture}[theorem]{Conjecture}

\theoremstyle{definition}

\theoremstyle{remark}
\newtheorem{remark}[theorem]{Remark}

\newtheorem{question}[theorem]{Question}

\newcommand{\Z}{\mathbb{Z}}

\newcommand{\wh}{\widehat}

\numberwithin{equation}{section}

\author[Tye Lidman]{Tye Lidman}
\address{Department of Mathematics, North Carolina State University}
\email{tlid@math.ncsu.edu}

\author[Trevor Oliveira-Smith]{Trevor Oliveira-Smith}

\address{Department of Mathematics, University of California, Davis}
\email{tdoliveirasmith@ucdavis.edu}

\author[Alexander Zupan]{Alexander Zupan}
\address{Department of Mathematics, University of Nebraska - Lincoln}
\email{zupan@unl.edu}

\title{A homological generalized Property R conjecture is false}
\date{}

\begin{document}
\maketitle

\begin{abstract}
The generalized Property R conjecture (GPRC) predicts that if framed surgery on an $n$-component link $L$ in $S^3$ produces $\#^{n} (S^1\times S^2)$, then $L$ is handleslide equivalent to an unlink, the obvious way to construct such a surgery.  Many potential counterexamples to the GPRC are known, but obstructing handleslide equivalence is a tricky proposition.  In this vein, we disprove a further generalization of the GPRC.  It would be reasonable to expect that if an $n$-component link in $S^3$ surgers to the connected sum of $n$ three-manifolds with the homology of $S^1 \times S^2$, then this link should be handleslide equivalent to an $n$-component split link, the obvious way to construct such a surgery.  However, we prove that there are 2-component framed links in $S^3$ that surger to a connected sum of homology $S^1\times S^2$'s but that are not handleslide equivalent, or even weakly handleslide equivalent, to a split link. 
\end{abstract}

\section{Introduction}

Using the theory of foliations, Gabai proved the famous Property R conjecture: 
\begin{theorem}[\cite{Ga}]
Let $K$ be a knot in $S^3$.  If Dehn surgery on $K$ yields $S^1 \times S^2$, then $K$ is unknotted.
\end{theorem}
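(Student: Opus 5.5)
The plan follows Gabai's route through taut foliations and sutured manifold theory. Let $X = S^3 \setminus \nu(K)$ denote the exterior of $K$. Suppose some slope $r$ gives $X(r) \cong S^1\times S^2$. Since $H_1(X(r)) \cong \Z$ and $H_1(X)\cong \Z$ is generated by the meridian, $r$ must be the $0$-slope. We argue by contradiction and assume $K$ is nontrivial.

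First I would take a minimal genus Seifert surface $F$ for $K$, so $g(F)=g(K)\ge 1$. Then $F$ is a norm-minimizing surface representing the generator of $H_2(X,\partial X)\cong\Z$. The main input is Gabai's theorem: for an irreducible $X$ with torus boundary and a Thurston-norm-minimizing surface $F$ meeting $\partial X$ in parallel curves of slope $r$, the surface $F$ is a leaf of a taut foliation $\mathcal F$ on $X$. This foliation can be chosen to restrict to a foliation of $\partial X$ by circles of slope $r$ near the boundary; more precisely, by parallel suspension foliations after spinning.

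The proof of that theorem goes through sutured manifold hierarchies. One cuts the exterior along $F$ to get a taut sutured manifold, then decomposes along a sequence of product disks and annuli and other taut surfaces until reaching a product. Taut foliations are built inductively back up the hierarchy, and the disk-decomposable condition keeps control of the boundary behavior. I expect this step to be the main obstacle; I would simply invoke it from \cite{Ga}.

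Given this foliation, Dehn filling along slope $0$ caps each boundary circle leaf with a meridian disk of the attached solid torus. This produces a taut foliation on $X(0)$ in which $F$ caps off to a closed leaf $\hat F$ of genus $g(K)\ge1$. A closed leaf of a taut foliation is incompressible, by Novikov's theorem. But every closed orientable surface of positive genus in $S^1\times S^2$ is compressible, because the universal cover is $\mathbb{R}\times S^2$, which contains no incompressible surfaces of positive genus. This gives a contradiction, so $g(K)=0$ and $K$ is unknotted.

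An alternative route is to show that $\hat F$ is Thurston-norm-minimizing in $X(0)$, which is exactly Gabai's Property R corollary. Its norm would then be $2g-2\ge 0$, whereas the class in $S^1\times S^2$ is represented by a sphere with norm $0$. This forces $g=1$, and that case is excluded by the same foliation argument.
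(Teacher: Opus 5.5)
This matches the paper, which gives no proof of its own and simply cites \cite{Ga}. Your outline is Gabai's argument: all of the depth is in the imported step (a taut foliation on $S^3_0(K)$ having the capped-off minimal genus Seifert surface $\hat F$ as a leaf), after which Novikov's theorem finishes, since $\pi_1(\hat F)$ cannot inject into $\pi_1(S^1\times S^2)\cong\Z$ when $g(\hat F)\ge 1$ (a cleaner reason than your universal-cover remark).

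One caution about how you describe the imported step. Capping off requires the induced foliation of $\partial X$ to consist entirely of circles of slope $0$: leaves spiraling onto $\partial F$ would give it nontrivial holonomy, so it could not bound a disk leaf. A merely Reebless ``suspension after spinning'' is therefore not enough. Securing the circle condition in general is the hard part of Gabai's work, and it does not come from disk decomposability, which holds only in special cases.
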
 
In other words, the only way to get $S^1 \times S^2$ via surgery on a knot in $S^3$ is the obvious one.  An attempt to extend Gabai's result to links encounters a subtlety:  Although 0-surgery on an $n$-component unlink produces $\#^{n} (S^1\times S^2)$, this is not the unique $n$-component surgery description of $\#^{n} S^1\times S^2$, as we can perform handleslides on the unlink to produce more complicated $n$-component links with the same surgery.  One then optimistically proposes the following:
\begin{conjecture}[Problem $1.82$ \cite{kirby-web}, generalized Property R conjecture (GPRC)]
Let $L$ be an $n$-component framed link in $S^3$.  If surgery on $L$ produces $\#^{n} (S^1\times S^2)$, then $L$ is handleslide equivalent to the unlink.
\end{conjecture}

Potential counterexamples to the GPRC were given in \cite{GST} and extended to broader families of possible counterexamples in \cite{MeZ}.  These links have stubbornly resisted progress in either direction; they do not appear to admit the desired handleslides, while obstructing handleslide equivalence is delicate and seemingly intractable.  Naturally, when one encounters a difficult conjecture, there are two paths forward:  Add hypotheses until the conjecture becomes true, or weaken them until the conjecture becomes false.  In this note, we relax restrictions on the result of surgery to explore whether a broadening of the GPRC is possible.  If $L$ is an $n$-component split link, performing 0-surgery on each component of $L$ yields a connected sum of $n$ three-manifolds, each a homology $S^1 \times S^2$.  In the spirit of the GPRC, it is natural to conjecture (up to handleslides) that this construction is the only way to concoct such a surgery.  However, we prove

\begin{theorem}\label{thm:main1}
There exists an infinite family of 2-component links $\{L_n\}$ with the following property:  The 0-surgery on $L_n$ produces a three-manifold of the form $Y_n \# Z_n$ with $H_1(Y_n) \cong H_1(Z_n) \cong \mathbb{Z}$, while $L_n$ is not handleslide equivalent (as a 0-framed link) to a split link.
\end{theorem}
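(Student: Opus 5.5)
Here is the plan: construct the links $L_n$ directly, and then find a handleslide invariant that every split link has but $L_n$ lacks.

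\emph{Construction.} Take $L_n = K_1 \cup K_2$ with both components $0$-framed and with the two components algebraically unlinked, $\mathrm{lk}(K_1,K_2)=0$. The linking matrix is then the zero matrix, so $H_1$ of the surgery is $\mathbb{Z}^2$. Each summand of the result will have $H_1 \cong \mathbb{Z}$.

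To produce a connected sum, build $L_n$ from a split link $J_1 \sqcup J_2$ by handlesliding in a controlled way that exits the class of split links. Concretely:
\begin{enumerate}
\item Start with $J_1 \sqcup J_2$, a split link with $0$-surgery $S^3_0(J_1)\# S^3_0(J_2)$.
\item Replace $K_2$ by a band sum of $J_2$ with a curve $\gamma$ that is nullhomologous in $S^3_0(J_1)$ but not isotopic in $S^3$ into the complement of a sphere separating the two components.
\item Alternatively, use the standard fact that $0$-surgery on a knot $J$ is also $0$-surgery on another knot $J'$ (e.g.\ from annulus twisting), in a version with a second component.
\end{enumerate}
The cleanest version is to choose $K_1$ and $K_2$ so that a separating sphere exists in the surgered manifold $Y_n \# Z_n$ but lives there only after surgery. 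For example, take a link where $K_2$ bounds a disk meeting $K_1$ in two points of opposite sign, chosen so that in $S^3_0(K_1)$ the curve $K_2$ becomes isotopic to a knot in a ball. That gives $S^3_0(L_n) = S^3_0(K_1) \# S^3_0(K_2')$. I would realize this with a satellite/Whitehead-double type construction indexed by $n$.

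\emph{Obstruction.} Handleslides preserve the $4$-manifold $X_L$ obtained by attaching $2$-handles along $L$, up to diffeomorphism rel boundary. More invariantly, they preserve the Kirby-calculus class with no $1$-handles. If $L$ were handleslide equivalent to a split link $J_1\sqcup J_2$, then $X_L \cong X_{J_1}\natural X_{J_2}$, a boundary connected sum of two knot traces.

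So it suffices to find an invariant of the trace $X_{L_n}$ that distinguishes it from every boundary sum of two $0$-traces. Candidate invariants:
\begin{itemize}
\item Since the intersection form is zero, it gives nothing; instead use genus functions or the set of classes represented by spheres.
\item Use the fact that in $X_{J_1}\natural X_{J_2}$ there is a basis $e_1,e_2$ of $H_2$ such that each $e_i$ is represented by a surface whose genus is bounded by the slice genus of $J_i$. Moreover, some class $e_1$ has minimal genus that is additive or decoupled.
\item Compare with adjunction-type or Heegaard Floer bounds for classes $a e_1 + b e_2$ in $X_{L_n}$.
\end{itemize}
Given the weak handleslide remark in the abstract, I would instead aim for an invariant of the boundary with its natural structure. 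For a split link, the surgery $S^3_0(J_1)\# S^3_0(J_2)$ has a sphere separating two pieces, and the images of the meridians of $J_1$ and $J_2$ give a basis of $H_1$ adapted to that sum decomposition. For $L_n$, the decomposition $Y_n\#Z_n$ may be unique: each summand is prime and not $S^1\times S^2$, for instance hyperbolic or a $0$-surgery on a nontrivial knot. By uniqueness of prime decomposition, any split link giving the same manifold must have $S^3_0(J_1)\cong Y_n$ and $S^3_0(J_2)\cong Z_n$. Handleslides act on the dual basis of $H_1$ by an element of $GL_2(\mathbb{Z})$, and meridians transform compatibly.

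The plan is to show that for $L_n$ the meridians $\mu_1,\mu_2$ generate $H_1(Y_n\#Z_n)$ in a way that is incompatible with every handleslide image of the split basis. The step I expect to be hardest is this invariant.

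My first choice is the following. Consider the trace $X_L$ and the homomorphism $H_2(X_L)\to H_2(Y\#Z)$, which is an isomorphism here. For a split link, the generator of $H_2(Y)$ extends over $X$ to a class represented by a surface of genus $g_4(J_1)$ disjoint from $Z$'s side. For $L_n$, I would show that the class corresponding to $H_2(Y_n)$ has large minimal genus in $X_{L_n}$ for every choice of capping. To bound that genus from below I would use the Ozsv\'ath--Szab\'o $d$-invariants or $\tau$ of the components, made to grow with $n$, which contradicts the split case.
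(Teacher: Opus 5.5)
\textbf{There is a genuine gap in both halves of your proposal: no link is actually constructed, and no working obstruction is identified.}

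\textbf{Construction.} The routes you suggest cannot produce an example. Any link obtained from a split link $J_1\sqcup J_2$ by handleslides is, by definition, handleslide equivalent to a split link. So ``handlesliding in a controlled way that exits the class of split links'' only leaves the class of split links, not its handleslide class, which is what the theorem is about.

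Your ``cleanest version'' fails for the same reason. Suppose $K_2$ becomes isotopic into a ball in $S^3_0(K_1)$. An isotopy in the surgered manifold is realized by isotopy in $S^3\setminus K_1$ together with slides of $K_2$ over $K_1$, and the ball can be shrunk off the surgery core. Hence $K_1\cup K_2$ is handleslide equivalent to a split link. The formula $S^3_0(K_1)\# S^3_0(K_2')$ you write down is exactly the split case.

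What you need is a link whose surgery is reducible only after surgering both components. The paper does this as follows. It takes the fibered knot $K_n=-T_{8n-1,8n-2}\# T_{32n^2-12n+1,2}$, whose $0$-surgery is Seifert fibered over $S^2$ with four exceptional fibers. It then lets $J_n$ be a curve on an essential vertical torus that also lies in a fiber, so the torus framing equals the $0$-framing. Surgery on $J_n$ compresses that torus to a reducing sphere, yielding $Y_n\# Z_n$, where each summand is a three-fiber Seifert fibered homology $S^1\times S^2$.

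\textbf{Obstruction.} The $H_1$-basis idea cannot work. Sliding one component over another changes the meridian classes by an elementary matrix, so the meridian basis can always be made adapted to $H_1(Y_n)\oplus H_1(Z_n)$. It carries no information.

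The genus, $d$-invariant, and $\tau$ idea has no mechanism for a contradiction. In the split case, $J_1$ is an unknown knot with $S^3_0(J_1)\cong Y_n$, and you have no control over its slice genus. A large lower bound in $X_{L_n}$ therefore contradicts nothing.

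\textbf{The missing idea.} It is already within reach of your prime-decomposition remark. Both summands are prime, and $H_1(S^3_0(J_i))\cong\Z$, so any split link with this surgery must have $S^3_0(J_i)$ equal to exactly one of $Y_n$, $Z_n$. It therefore suffices to choose $Y_n$ to be a homology $S^1\times S^2$ that is \emph{not} $0$-surgery on any knot in $S^3$. Such manifolds exist: Hedden--Kim--Mark--Park treat $n$ odd and Johnson treats $n$ even, with
\[
Y_n=\left(S^2; -\frac{2}{1}, \frac{8n-1}{1}, \frac{16n-2}{8n-3}\right).
\]
The paper's link is engineered precisely so that this $Y_n$ appears as a summand. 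Without such a non-surgery input, or an actual invariant replacing it, your argument does not close.
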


The theorems above have a deeper 4-dimensional perspective as well.  Gabai's result implies that a smooth homotopy ball built out of a single 1- and 2-handle must necessarily be diffeomorphic to the standard four-ball.  The GPRC would imply that a smooth homotopy 4-sphere with a handle decomposition with no 1-handles is diffeomorphic to the standard four-sphere.  Such a homotopy 4-sphere is called \emph{geometrically simply-connected}.  While it is still unknown whether the links in \cite{GST} and \cite{MeZ} are counterexamples to the GPRC, it was shown that these links give rise to standard $S^4$'s (see also \cite{gompf-killing}).  The authors of \cite{GST} state a weaker version of the GPRC, which has the same $4$-dimensional conclusion but allows for the introduction of cancelling handle-pairs.

Two framed links are said to be \emph{weakly handleslide equivalent} if they are related by handleslides, the addition or deletion of split 0-framed unknots, and the addition or deletion of split Hopf pairs (a Hopf link consisting of one 0-framed unknot and one dotted unknot, representing a 4-dimensional 1-handle).  Further details can be found in~\cite{GS} or~\cite{MSZ}.

\begin{conjecture}[\cite{GST}, weak GPRC]
Let $L$ be an $n$-component framed link in $S^3$.  If surgery on $L$ produces $\#^{n} S^1\times S^2$, then $L$ is weakly handleslide equivalent to an unlink.
\end{conjecture}

By Cerf~\cite{cerf}, the weak GPRC is equivalent to the assertion that every geometrically simply-connected homotopy 4-sphere is diffeomorphic to the standard 4-sphere.  In this context, we prove

\begin{theorem}\label{thm:main2}
There exists an infinite family of 2-component links $\{L_n\}$ with the following property:  The 0-surgery on $L_n$ produces a three-manifold of the form $Y_n \# Z_n$ with $H_1(Y_n) \cong H_1(Z_n) \cong \mathbb{Z}$, while $L_n$ is not weakly handleslide equivalent (as a 0-framed link) to a 2-component split link.
\end{theorem}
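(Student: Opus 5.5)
Throughout, ``split'' means split as a 2-component link. The plan is to find an invariant of 0-framed 2-component links that is preserved by weak handleslide equivalence, that takes a restricted set of values on split links, and that takes a forbidden value on $L_n$.

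\textbf{Choice of invariant.} Weak handleslide equivalence preserves the 4-manifold obtained by attaching 2-handles along $L$, up to adding $S^2\times S^2$-type cancelling summands, together with its boundary. More usefully, it preserves the closed 4-manifold obtained by capping off. Concretely, if $X_L$ is the trace and $\partial X_L = Y\# Z$, one cannot simply cap with 3- and 4-handles, because $Y\#Z$ is not $\#^2 S^1\times S^2$. So the invariant has to be extracted from the boundary 3-manifold together with a homological datum.

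A clean choice is the following. Handleslides and the addition of split Hopf pairs or 0-framed unknots do not change the diffeomorphism type of the surgered 3-manifold $M=Y\#Z$, beyond connected sums with $S^1\times S^2$. They also preserve a natural subgroup of $H_1(M)\cong\mathbb Z^2$: the image of the meridians is all of $H_1(M)$, but the linking/intersection form on the trace is preserved up to stabilization.

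For a split link $J_1\sqcup J_2$, the surgery is $S^3_0(J_1)\#S^3_0(J_2)$. Hence its prime summands, other than $S^1\times S^2$ summands, are 0-surgeries on knots in $S^3$. In particular, each homology $S^1\times S^2$ summand is \emph{0-surgery on a knot}.

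\textbf{Construction of $L_n$.} I will build $L_n$ so that $M_n = Y_n\# Z_n$, where $Y_n$ is a homology $S^1\times S^2$ that is provably not 0-surgery on any knot. Moreover, $Y_n$ should not become a summand of such a surgery after adding $S^1\times S^2$'s, which is automatic by uniqueness of prime decomposition. Take $Y_n$ to be 0-surgery on a knot in a homology sphere such as $\Sigma(2,3,5)\#\Sigma(2,3,5)$, or a suitable Brieskorn-type manifold. Then take $Z_n$ to be 0-surgery on a knot in the orientation reverse, so that $Y_n\#Z_n$ is still integral surgery on a 2-component link in $S^3$. The natural way to achieve this is to realize the two homology spheres $P$ and $-P$ as cancelling pieces: $P\#-P$ bounds a contractible-ish object, and a 2-component link whose surgery is $(P\setminus\text{knot})\cup(-P\setminus\text{knot})$ can be drawn explicitly. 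I would instead aim for an explicit 2-component link with linking number 0 and 0-framings whose surgery visibly decomposes along an essential 2-sphere. One such link is obtained by band-summing, so that a separating sphere exists in the surgery. Then identify each side.

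\textbf{Obstruction, the main step.} I must show $Y_n\ne S^3_0(K)$ for every knot $K$. I would use Heegaard Floer homology. Since $S^3_0(K)$ has $HF^+$ in the torsion spin$^c$ structure with a standard tower structure, its $d_{\pm1/2}$ invariants satisfy $d_{1/2}\le 1/2$ and $d_{-1/2}\ge -1/2$ (Ozsváth–Szabó), with $d_{1/2}-1=d_{-1/2}$ constrained. Choosing $Y_n$ to be 0-surgery on a knot in $P=\#^n\Sigma(2,3,5)$, whose $d$-invariant is $2n$, gives $d_{1/2}(Y_n)$ roughly $2n\pm$ const. This violates the bound for large $n$, and it also yields infinitely many distinct examples.

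Since $d$-invariants of prime summands are determined by $M_n$ (additivity plus prime decomposition; $S^1\times S^2$ summands contribute 0), no weak handleslide sequence can reach a split link. The hard part I anticipate is verifying that $Y_n\#Z_n$ really arises from a 2-component link in $S^3$, and computing $d_{\pm1/2}(Y_n)$ exactly. I would try to arrange $Y_n = S^3_0(K)\#P$-style data using the surgery-on-a-knot-in-$P$ formula of Ni–Wu/Ozsváth–Szabó.
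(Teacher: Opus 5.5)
Your reduction is the same as the paper's. Weak handleslide equivalence changes the surgered manifold only by $S^1\times S^2$ summands. A split link whose surgery has $H_1\cong\Z^2$ must be 0-framed and surgers to $S^3_0(J_1)\#S^3_0(J_2)$. So by prime decomposition it suffices to find $L_n$ whose surgery has a homology $S^1\times S^2$ summand that is not 0-surgery on a knot in $S^3$. (Your preliminary discussion of invariants is off: weak handleslides change the trace by $\natural\, S^2\times D^2$ summands, not $S^2\times S^2$. But you land on the right criterion.)

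\textbf{The gap.} You never construct $L_n$, and that construction is the entire content of the theorem, not a routine step. You posit $Y_n=P_0(K)$ and $Z_n=(-P)_0(K')$ with $P=\#^n\Sigma(2,3,5)$, but you exhibit no 2-component link in $S^3$ whose 0-surgery is $Y_n\#Z_n$. ``Band-summing so that a separating sphere exists'' neither specifies a link nor identifies the two sides. Realizability is a real constraint. The natural source of a reducible 0-surgery on a 2-component link is a curve $B$ on the swallow-follow torus $T$ in $S^3_0(A_1\#A_2)=E(A_1)\cup_T E(A_2)$. There, the requirement that $B$ be nullhomologous with torus framing equal to the 0-framing forces the longitudinal slope. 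That only produces $S^3_0(A_1)\#S^3_0(A_2)$, from a link handleslide equivalent to $A_1\sqcup A_2$. Something genuinely different is needed, and nothing in your proposal shows your particular $Y_n\#Z_n$ arises at all.

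\textbf{How the paper fills it.} The paper takes $K_n=-T_{8n-1,8n-2}\#T_{32n^2-12n+1,2}$. Because $(8n-1)(8n-2)=2(32n^2-12n+1)$, the Seifert fibrations of the two torus knot exteriors match across the swallow-follow torus. Hence $M_n=S^3_0(K_n)$ is Seifert fibered over $S^2$ with exceptional fibers of orders $2$, $8n-1$, $8n-2$, $32n^2-12n+1$. Then $J_n$ is taken on a \emph{different} vertical torus: the preimage of a curve in the base orbifold separating the cone points $\{2,8n-1\}$ from the other two, intersected with the fiber $\widehat F$. Since $\gcd(2,8n-1)=1$, $J_n$ separates $\widehat F$, so its torus framing is the 0-framing and $\mathrm{lk}(K_n,J_n)=0$. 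Surgery on $J_n$ compresses this torus to a sphere, splitting $M_n$ into two Seifert spaces with three exceptional fibers each. Euler number zero then forces $Y_n=(S^2;-\frac{2}{1},\frac{8n-1}{1},\frac{16n-2}{8n-3})$; the paper also checks this by explicit Kirby calculus. The obstruction for this $Y_n$ is imported from Hedden--Kim--Mark--Park and Johnson rather than derived.

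Your Heegaard Floer obstruction is plausible in principle but not carried out. Since $d_{1/2}(P_0(K))=d(P)+\frac12-2V_0(K)$, the bound $d_{1/2}\le\frac12$ is violated only when $2V_0(K)<d(P)$. That is a condition on the knot, not merely ``large $n$''. In any case it has nothing to apply to until a link is constructed.
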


Theorem~\ref{thm:main1} follows immediately from Theorem~\ref{thm:main2}.  To prove the theorem, we note that weak handleslide equivalence preserves the result of surgery on a link, up to adding or deleting $S^1 \times S^2$ summands.  Thus, if a 2-component link $L$ with surgery to $Y \# Z$, where $Y$ and $Z$ are homology $S^1 \times S^2$'s, is weakly handleslide equivalent to a split link $K_1 \sqcup K_2$, then each summand of the manifold $Y \# Z$ obtained by surgery on $L$ is obtained by surgery on $K_1$ or $K_2$.  Hedden, Kim, Mark, and Park proved that for $n$ odd, a certain family $\{Y_n\}$ of Seifert fibered spaces cannot be obtained by surgery on a knot in $S^3$~\cite{HKMP}, and Johnson extended their work to the even case~\cite{johnson}.  Thus, we complete the proof of the theorem by constructing links $L_n$ with surgeries to $Y_n \# Z_n$, where $Y_n$ lives in this impermissible family.

Given a framed knot $K$ in $S^3$, the \emph{knot trace} $X_K$ is the compact 4-manifold constructed by adding attaching a 2-handle to $B^4$ along $K$ via its framing.  For a framed link $L$, the \emph{link trace} $X_L$ is defined similarly, attaching one 2-handle for each component of $L$.  With this perspective, two links have diffeomorphic traces if and only if they are weakly handleslide equivalent~\cite{cerf}.  For the links $L_n$ constructed in Theorem~\ref{thm:main2}, our proof implies that $X_{L_n}$ is not a boundary connected sum of two knot traces.  More generally, $X_{L_n}$ cannot be expressed smoothly as $X_1 \natural X_2$ with $\partial X_1 = Y_n$ and $\partial X_2 = Z_n$.  This is because the $X_i$ would necessarily be homotopy $S^2 \times D^2$'s, which would contradict the work of \cite{HKMP, johnson}.  However, we can still ask the topological analogue:

\begin{question}
Is $X_{L_n}$ homeomorphic to the boundary sum of two topological four-manifolds $X_1$ and $X_2$ with $\partial X_1 = Y_n$ and $\partial X_2 = Z_n$?
\end{question}

%Given a framed knot $K$ in $S^3$, the \emph{knot trace} $X_K$ is the compact 4-manifold constructed by adding attaching a 2-handle to $B^4$ along $K$ via its framing.  For a framed link $L$, the \emph{link trace} $X_L$ is defined similarly, attaching one 2-handle for each component of $L$.  With this perspective, two links have diffeomorphic traces if and only if they are weakly handleslide equivalent~\cite{cerf}.  For the links $L$ constructed in Theorem~\ref{thm:main2}, our proof implies that $X_L$ is not a boundary connected sum of two knot traces.  However, we can still ask the following:

%\begin{question}
%For the links $L_n$ in Theorem~\ref{thm:main2}, where surgery yields $Y_n \# Z_n$, the corresponding link traces $X_{L_n}$ cannot be expressed smoothly as $X_1 \natural X_2$ with $\partial X_1 = Y_n$ and $\partial X_2 = Z_n$.  This is because the $X_i$ would necessarily be homotopy $S^2 \times D^2$'s and this would contradict the work of \cite{HKMP, johnson}.  But, is $X_{L_n}$ homeomorphic to the boundary sum of two topological four-manifolds $X_1$ and $X_2$ with $\partial X_1 = Y_n$ and $\partial X_2 = Z_n$?
%For the links $L_n$ in Theorem~\ref{thm:main2}, where surgery yields $Y_n \# Z_n$, can the corresponding link traces $X_{L_n}$ be expressed as $X_1 \natural X_2$ with $\partial X_1 = Y_n$ and $\partial X_2 = Z_n$?  In other words, does the connect sum 3-sphere in $\partial X_{L_n}$ bound a 4-ball in $X_{L_n}$? 
%\end{question}

Finally, turning back to the connections with the GPRC, we wonder if a refinement of our results is possible, especially in light of the fact that the candidate links from~\cite{GST} and~\cite{MeZ} satisfy the weak GPRC but are not known to satisfy the GPRC.

\begin{question}
Is there a 2-component framed link $L$ such that surgery on $L$ produces a three-manifold of the form $Y \# Z$ with $H_1(Y) \cong H_1(Z) \cong \mathbb{Z}$ and $L$ is weakly handleslide equivalent, but not handleslide equivalent, to a 2-component split link?
\end{question}

\section*{Outline} In Section~\ref{sec:prelim} we discuss some basics of Seifert fibered spaces and assemble the ingredients we need to prove the main theorem.  In Section~\ref{sec:proof}, we construct the relevant links and prove Theorem~\ref{thm:main2}.  In Section~\ref{sec:proof2}, we provide an alternative perspective on the links $\{L_n\}$.  

\section*{Acknowledgements} TL and AZ thank the organizers of the Midwest Panorama of Geometry and Topology at the University of Iowa in June 2025, where part of this work was completed.  AZ also thanks Jeffrey Meier for helpful conversations related to this work.  TL was supported in part by NSF grants DMS-2105469 and DMS-2506277 and a Simons Travel Support award.  TOS was supported as a GAANN Fellow through the Mathematics at UC Davis GAANN grant funded by the Department of Education grant number P200A240025.  AZ was supported in part by NSF grant DMS-2405301 and a Simons Travel Support award.

\section{Preliminaries}\label{sec:prelim}

We work in the smooth category, and we use $\eta( \cdot )$ to denote a regular neighborhood.  Given a link $L$ in $S^3$, a \emph{framing} for $L$ is a choice of an extended rational number $a/b$ for each component $K$ of $L$, expressed as $a[\mu] + b[\ell] \in H_1(\partial \eta(K))$, where $\eta(K)$ is a regular neighborhood of $K$, $\mu$ is a meridian of the solid torus $\eta(K)$, and $\ell$ is the Seifert longitude.  For a framed link $L$, the result of \emph{Dehn surgery} on $L$ is the 3-manifold obtained by removing neighborhoods of each component and reattaching solid tori via maps sending meridians to the curves $a[\mu] + b[\ell]$ corresponding to the framings of each component.  A \emph{surgery diagram} for a 3-manifold $Y$ consists of a framed link $L$ such performing the associated surgery on each component results in $Y$.  For further details, see~\cite{GS}, for instance.

\begin{remark}
Our use of ``framing" is a slight abuse of notation/language.  In the literature, ``framing" typically refers to an integral framing of the form $a/1$, and our use of ``framing" might instead be called a ``surgery coefficient" or ``slope."
\end{remark}

We have the following elementary lemma.

\begin{lemma}\label{lem:alg-unlink} 
Let $L = K\cup J$ be a framed two-component link in $S^3$ with a surgery to a twice connect sum of homology $S^1\times S^2$'s. Then the components of $L$ are algebraically unlinked and $0$-framed.
\end{lemma}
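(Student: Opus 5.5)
The plan is to compute $H_1$ of the surgered manifold from the linking matrix and compare it with $H_1(Y\# Z)\cong \mathbb{Z}^2$. Write the framings as $p_1/q_1$ on $K$ and $p_2/q_2$ on $J$, with $\gcd(p_i,q_i)=1$, and set $\ell = \mathrm{lk}(K,J)$. The exterior $S^3\setminus \eta(L)$ has $H_1\cong\mathbb{Z}^2$, generated by the meridians $\mu_K,\mu_J$. In this exterior the Seifert longitudes satisfy $[\lambda_K]=\ell\mu_J$ and $[\lambda_J]=\ell\mu_K$. Dehn filling kills the classes $p_1\mu_K+q_1\lambda_K$ and $p_2\mu_J+q_2\lambda_J$. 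Hence $H_1$ of the surgered manifold is the cokernel of the matrix
\[
A=\begin{pmatrix} p_1 & q_1\ell \\ q_2\ell & p_2\end{pmatrix}.
\]

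For $Y\# Z$ with $Y,Z$ homology $S^1\times S^2$'s, Mayer--Vietoris gives $H_1\cong\mathbb{Z}^2$. The cokernel of a $2\times 2$ integer matrix is $\mathbb{Z}^2$ exactly when the matrix is the zero matrix, since any nonzero entry forces the rank to be at least one. So $A=0$, which means $p_1=p_2=0$ and $q_1\ell=q_2\ell=0$.

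Coprimality with $p_i=0$ forces $q_i=\pm1$, so each framing is $0/1$, that is, $0$-framed. Then $q_1\ell=0$ gives $\ell=0$, so the components are algebraically unlinked.

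The only step needing care is getting the relation $[\lambda_K]=\ell\mu_J$ right in $H_1$ of the link exterior, including the convention for rational framings. The argument is otherwise a routine linear-algebra check, so I do not expect a real obstacle.
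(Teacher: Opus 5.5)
Your proof is correct and is essentially the paper's argument. The paper notes that the surjection $H_1(S^3\setminus\eta(L))\cong\Z^2\to H_1(Y\# Z)\cong\Z^2$ must be an isomorphism, so each framing curve already dies in the link exterior; that is exactly your condition $A=0$. Your explicit presentation matrix makes the last step cleaner than the paper's wording: the kernel of $H_1(T)\to H_1(S^3\setminus\eta(L))$ is spanned by the Seifert longitude only when the linking number vanishes, and is trivial otherwise.
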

\begin{proof}
Let $Y \# Z$ denote the result of our prescribed surgery on $L$. The Meyer-Vietoris sequence immediately gives us $H_{1}(S^3\setminus\eta(L))\cong \Z^2$, with generators given by the meridians of the components of $L$. By filling in the solid tori via the specified framing, the homomorphism induced by the inclusion $$i_*: H_1(S^3\setminus\eta(L))\to H_{1}(Y \# Z)$$ is an epimoprhism. Since $H_1(Y \# Z) = \Z^2$, we may conclude that $i_*$ is actually an isomorphism. For each toroidal component $T$ of $\partial\eta(L)$, the solid torus we fill in will kill some generator of $H_{1}(T)$. Thus, the induced homomorphism $H_{1}(T)\to H_{1}(Y \# Z)$ has non-trivial kernel containing the framing curve. But the induced inclusion $H_{1}(T)\to H_{1}(S^3\setminus\eta(L))$ has kernel generated by the Seifert longitude. Looking at the induced composition homomorphism, it must follow that the framing on each component is the Seifert longitude, i.e the $0$-framing. Since each Seifert longitude is null-homologous in the complement $S^3\setminus\eta(L)$, the linking numbers must all be trivial.
\end{proof}

Next, we give a brief treatment of Seifert fibered spaces.  We follow the notation of Chapter $10$ of \cite{Mart}.  The Seifert fibered space $(S^2; \frac{p_1}{q_1}, \ldots, \frac{p_k}{q_k})$, which is denoted $(S^2, (p_1,q_1),\dots,(p_k,q_k))$ in~\cite{Mart}, has base orbifold $S^2$ and $k$ singular fibers of orders $p_1,\ldots, p_k$, and it is represented by the surgery diagram in Figure~\ref{fig:seifert}.

\begin{figure}[h!]
    \centering
    \includegraphics[width=0.5\textwidth]{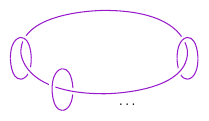}
        \put (-236,55) {$\frac{p_1}{q_1}$}
        \put (-190,20) {$\frac{p_2}{q_2}$}
        \put (-7,55) {$\frac{p_k}{q_k}$}
        \put (-120,128) {$0$}
\caption{Surgery description for the Seifert fibered space $(S^2; \frac{p_1}{q_1},\ldots, \frac{p_k}{q_k})$}
\label{fig:seifert}
\end{figure}

From~\cite{Mart}, we have a useful lemma.

\begin{lemma}\label{lem:S3}
The Seifert fibered space $\left(S^2; \frac{p_1}{q_1},\frac{p_2}{q_2}\right)$ is $S^3$ if and only if $p_1q_2 + p_2q_1 = \pm 1$.
\end{lemma}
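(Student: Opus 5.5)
We will use the surgery diagram of Figure~\ref{fig:seifert} with $k=2$: a $0$-framed unknot $U$ with two meridians $m_1,m_2$, carrying coefficients $p_1/q_1$ and $p_2/q_2$. The plan is to reduce this three-component diagram to a single rational surgery on an unknot with explicit coefficient. Then we read off when that surgery is $S^3$.

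First, each $m_i$ is a meridian of $U$, so it is an unknot. Perform a Rolfsen twist on $m_1$, or equivalently a slam-dunk. The slam-dunk move says that if a meridian of an integrally framed component $U$ (framing $n$) carries coefficient $r$, the pair may be replaced by $U$ alone with coefficient $n - 1/r$. Since the $m_i$ need not be integral, we instead view $U$ as the component being removed: $U$ has integer framing $0$ and is a meridian of $m_1$. Slam-dunking $U$ into $m_1$ is not directly available, because $U$ also links $m_2$.

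So we proceed differently. Slam-dunk $m_1$ into $U$: since $U$ is integrally framed ($0$) and $m_1$ is its meridian with coefficient $p_1/q_1$, we get $U$ with coefficient $0 - q_1/p_1 = -q_1/p_1$, with $m_2$ still a meridian. Now $U$ is an unknot with rational coefficient $-q_1/p_1$ and $m_2$ is an unknot with coefficient $p_2/q_2$, forming a Hopf link.

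Next, apply a Rolfsen twist to $U$ to make its coefficient integral. Write $-q_1/p_1 = 1/(t + r')$ in the appropriate form, or more cleanly compute via the standard fact for Hopf links. Rational surgery on a Hopf link with coefficients $r$ and $s$ is the lens space obtained by gluing two solid tori. With $r = a/b$ and $s = c/d$, one computes $H_1$ from the linking matrix in the homological form: the order of $H_1$ is $|ac - bd|$. Here this gives
\[|H_1| = |(-q_1)p_2 - p_1q_2| = |p_1q_2 + p_2q_1|.\]
More precisely, the complement of the Hopf link is $T^2\times I$. Filling both ends with solid tori whose meridians are $a\mu_1+b\lambda_1$ and $c\mu_2+d\lambda_2$, where $\mu_1=\lambda_2$ and $\lambda_1=\mu_2$, yields a lens space whose order is the determinant $|\det\begin{pmatrix} a & b\\ d & c\end{pmatrix}| = |ac-bd|$. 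This is $S^3$ exactly when that determinant is $\pm1$, and $S^1\times S^2$ when it is $0$.

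Combining, $(S^2;\frac{p_1}{q_1},\frac{p_2}{q_2})$ is a lens space (or $S^1\times S^2$) with $|H_1| = |p_1q_2+p_2q_1|$. Lens spaces $L(p,q)$ are homeomorphic to $S^3$ iff $p=\pm1$, so the claim follows. The main point requiring care is the bookkeeping in the slam-dunk and in the Hopf-link gluing, specifically the identification $\mu_1\leftrightarrow\lambda_2$ and the signs. However, the final condition $p_1q_2+p_2q_1=\pm1$ is insensitive to the overall sign, so a sign slip there would not affect the conclusion.
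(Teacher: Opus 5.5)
Your proof is correct. Underneath, it uses the same decomposition as the paper; only the bookkeeping differs.

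The paper reads everything off the Seifert structure. $Y$ is surgery on two regular fibers $m_1,m_2$ of $S^1\times S^2$, whose complement is $T^2\times I$. So $Y$ is two fibered solid tori glued along a Heegaard torus, and in a common basis on that torus the two meridians are the $(p_1,q_1)$- and $-(p_2,q_2)$-curves, with intersection number $p_1q_2+p_2q_1$.

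You instead slam-dunk $m_1$ into $U$. This amounts to observing that a neighborhood of $U$ containing $m_1$ is, after both surgeries, again a solid torus, with meridian slope $-q_1/p_1$; its boundary is the paper's Heegaard torus. You then treat the result as rational surgery on a Hopf link and get $|ac-bd|$ from the linking matrix. Your route computes the determinant homologically, so it is visibly independent of how the Hopf link is oriented. The paper's route is shorter and never leaves the Seifert-fibered picture.

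Two remarks. First, the slam-dunk needs $p_1\neq 0$, which holds because $|p_1|$ is the order of a fiber. Second, your closing claim that sign slips are harmless is true only for an \emph{overall} sign. A relative slip, such as using $n+1/r$ instead of $n-1/r$ in the slam-dunk, would give the condition $p_1q_2-p_2q_1=\pm1$. That is wrong: $(S^2;\frac{2}{1},-\frac{1}{1})=S^3$, but $p_1q_2-p_2q_1=3$. Your computation is right because you used the correct slam-dunk formula, so that remark should be removed. You should also delete the abandoned attempts at a Rolfsen twist and at slam-dunking $U$ into $m_1$.
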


\begin{proof}
The space $Y = \left(S^2; \frac{p_1}{q_1},\frac{p_2}{q_2}\right)$ is given by surgery on two regular fibers in $S^1 \times S^2$, and thus $Y$ is the union of two Seifert fibered solid tori.  On a fixed Heegaard torus separating the two solid tori, one meridian is a $(p_1,q_1)$-curve, while the other is a $-(p_2,q_2)$-curve.  The intersection number of these two curves is given by $p_1q_2 + q_1p_2$.  It follows that $Y = S^3$ exactly when this intersection number is $\pm 1$.
\end{proof}

The \emph{Euler number} of a Seifert fibered space $Y = (S^2;\frac{p_1}{q_1},\dots,\frac{p_k}{q_k})$ is given by $e(Y) = \frac{q_1}{p_1} + \dots +\frac{q_k}{p_k}$.  Notably, if $Y$ is a homology $S^1 \times S^2$, then $e(Y) = 0$~\cite[Proposition 10.3.5]{Mart}.

It will be helpful to describe a knot $K \subset S^3$ by drawing an additional unlabeled curve in a surgery diagram for $S^3$.  For example, by Lemma~\ref{lem:S3}, we have that any $p$ and for any odd $q$, with $r = \frac{q+1}{2}$, both $(S^2; \frac{p}{1},-\frac{p-1}{1})$ and $(S^2; \frac{2}{1},-\frac{r}{q})$ represent $S^3$, but with different Seifert fibrations.  Thus the curves shown in Figure~\ref{fig:torus} represent knots in $S^3$.  In particular, in both cases, the knot is contained in a Heegaard torus for $S^3$, and with these parameters, represent a $-(p,p-1)$-torus knot $-T_{p,p-1}$ or a $-(q,2)$-torus knot $-T_{q,2}$, respectively.

\begin{figure}[h!]
    \centering
    \includegraphics[width=0.3\textwidth]{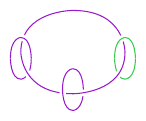} \qquad \qquad
        \includegraphics[width=0.3\textwidth]{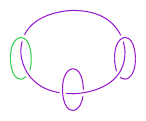}
        \put (-330,50) {$p$}
        \put (-285,-3) {$-(p-1)$}
        \put (-268,112) {$0$}
        \put (-5,50) {$2$}
        \put (-60,2) {$\frac{r}{q}$}
        \put (-70,112) {$0$}

\caption{A $-(p,p-1)$-torus knot (left) and $-(q,2)$-torus knot (right) in $S^3$}
\label{fig:torus}
\end{figure}

A critical ingredient in our proof of Theorem~\ref{thm:main2} comes from~\cite{HKMP} and~\cite{johnson}, in which they obstruct certain 3-manifolds from being obtained by surgery on a knot in $S^3$.  The case of $n$ odd is completed in~\cite{HKMP} and extended to $n$ even in~\cite{johnson}.

\begin{theorem}\cite{HKMP,johnson}\label{thm:deus}
For any natural number $n$, the homology $S^1 \times S^2$ given by
\[Y_n = \left(S^2; -\frac{2}{1}, \frac{8n-1}{1}, \frac{16n-2}{8n-3}\right)\]
 is not obtained by 0-surgery on a knot in $S^3$.
\end{theorem}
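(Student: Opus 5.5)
We need to check two things: that $Y_n$ is a homology $S^1\times S^2$, and that it is not $0$-surgery on a knot. Only the first is a computation we carry out here. For the second we rely on the cited work: \cite{HKMP} treats $n$ odd and \cite{johnson} treats $n$ even. Below we explain how we expect that argument to be organized, as a sketch of its structure rather than a re-proof.

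\emph{Homology.} The Euler number is
\[ e(Y_n) = -\tfrac12 + \tfrac{1}{8n-1} + \tfrac{8n-3}{16n-2} = -\tfrac12 + \tfrac{2 + 8n-3}{2(8n-1)} = 0 .\]
So $b_1(Y_n)=1$. To get $H_1(Y_n)\cong \Z$ exactly, we use the surgery presentation of Figure~\ref{fig:seifert}. Let $h$ be the class of the meridian of the $0$-framed central curve (the fiber class) and $x_i$ the meridians of the three satellite curves. The relations are $p_i x_i + q_i h = 0$ for each $i$, together with $x_1+x_2+x_3=0$. With $(p_i,q_i) = (-2,1),\ (8n-1,1),\ (16n-2,8n-3)$, we would row-reduce this $4\times 4$ relation matrix and check that its nonzero elementary divisors are all $1$. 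The fact that $e=0$ makes the determinant vanish, so we expect rank $3$ with trivial torsion. A gcd computation suffices here, since $\gcd(8n-1,\,8n-3)=1$ and $16n-2 = 2(8n-1)$.

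\emph{Obstruction.} Suppose $Y_n = S^3_0(K)$. Then the Heegaard Floer homology of $Y_n$ in its torsion spin$^c$ structure $\mathfrak s_0$ is constrained by the mapping cone formula. The correction terms must satisfy $d_{1/2}(Y_n)\le \tfrac12$ and $d_{-1/2}(Y_n)\ge -\tfrac12$. More strongly, $HF^+(Y_n,\mathfrak s_0)$ must be built from the two towers and $HF^+_{\mathrm{red}}$ in the specific pattern determined by the sequence $V_0\ge V_1\ge \cdots$ of $K$. In particular, the grading shifts of the reduced part relative to the towers are controlled by a non-increasing sequence that drops by at most one at each step.

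On the other side, $Y_n$ is a Seifert fibered space over $S^2$, and it bounds a star-shaped plumbing. Since $e=0$, the plumbing form is degenerate rather than negative definite, so one would use an almost-rational or degenerate version of Némethi's graded roots, or equivalently Ozsváth--Szabó's computation for plumbings with one bad vertex. That machinery computes $HF^+(-Y_n,\mathfrak s_0)$ explicitly as a function of $n$. The comparison step is to show that the resulting graded root, meaning the $d$-invariants together with the depths and positions of the reduced summands, cannot be realized by any admissible sequence $\{V_i\}$.

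\emph{Main obstacle.} The hard step is this explicit Floer computation for the whole family together with the combinatorial mismatch. The parity of $n$ changes the shape of the graded root, which is presumably why the even case needed the separate treatment of \cite{johnson}. Since the statement is exactly the content of \cite{HKMP,johnson}, in the paper we would verify the homology claim as above and then invoke those works for non-realizability.
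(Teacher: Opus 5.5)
Your proposal takes the same route as the paper, which also does not reprove this result but imports it from \cite{HKMP} (odd $n$) and \cite{johnson} (even $n$). Your Euler number and $H_1$ check is correct, since the presentation collapses to $\langle x_1,x_2 \mid 2x_1+(8n-1)x_2=0\rangle\cong\Z$, and it makes explicit something the paper leaves implicit.

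The one step the paper adds that you skip is matching parametrizations. \cite{johnson} states the obstruction for $A_n=(S^2;-\frac{2}{1},\frac{-8n+1}{1},\frac{16n-2}{8n+1})$, and this equals $-Y_n$ after shifting the invariants $q_i/p_i$ by integers summing to zero. Since $-S^3_0(K)\cong S^3_0(\overline{K})$, an obstruction for $A_n$ is also an obstruction for $Y_n$.
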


\begin{remark}
The manifolds obstructed in Theorem 1.2.1 of~\cite{johnson} are identified as
\[A_n = \left(S^2;-\frac{2}{1}, \frac{-8n+1}{1},\frac{16n-2}{8n+1}\right),\]
whose parameters are not identical to those given in Theorem~\ref{thm:deus}.  Nevertheless, we can see that $A_n = (S^2;\frac{2}{1}, \frac{-8n+1}{1},-\frac{16n-2}{8n-3}) = -Y_n$ (see e.g. Proposition 10.3.11 from~\cite{Mart} on the equivalence of Seifert fibered spaces) and thus $A_n$ is obtained by surgery on a knot $K$ in $S^3$ if and only if $Y_n$ is obtained by surgery on $-K$ in $S^3$.
\end{remark}

Finally, to construct the links from Theorem~\ref{thm:main2}, we use manipulations of a surgery diagram without altering the corresponding 3-manifold $Y$.  The collection of these moves is often referred to as \emph{Kirby calculus}, and three of them are described below and shown in Figure~\ref{fig:kirby}.

\begin{enumerate}
\item We can cancel any component with integral surgery coefficient and a 0-framed meridian.
\item If a 0-framed unknot bounds a disk meeting exactly two components with integer framings $n$ and $m$ each in a single point, we can band these two components along the disk and cancel the unknotted component.  The new component has framing $n+m$.
\item If one $r$-framed component $K_1$ (with $r \in \mathbb{Q}$ and $r \neq 0$) is a meridian of another $n$-framed component $K_2$, where $n \in \Z$, we can cancel $K_1$ and endow $K_2$ with framing $n - \frac{1}{r}$.
\end{enumerate}

\begin{figure}[h!]
    \centering
    \phantom{hi} \includegraphics[width=0.3\textwidth]{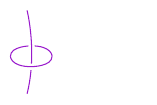}
        \includegraphics[width=0.3\textwidth]{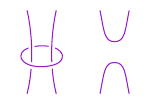} \qquad
                \includegraphics[width=0.3\textwidth]{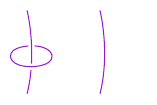}
        \put (-400,36) {$0$}
        \put (-384,42) {$\sim \quad \emptyset$}
        \put (-250,30) {$0$}
        \put (-291,70) {$n$}
        \put (-256,70) {$m$}
        \put (-189,70) {$n+m$}
        \put (-237,42) {$\sim$}
        \put (-95,30) {$r$}
        \put (-110,70) {$n$}
        \put (-43,70) {$n-\frac{1}{r}$}
        \put (-73,42) {$\sim$}
\caption{Moves of type (1) at left, type (2) at center, and type (3) at right.}
\label{fig:kirby}
\end{figure}

These moves are well-known; the move of type (1) can be found for example in Proposition 3.3 of~\cite{Sav}, the move of type (2) is the example shown in Figure 3.24 of~\cite{Sav}, and the move of type (3) is a ``slam-dunk", shown in Figure 5.30 of~\cite{GS}.  More details about Kirby calculus can be found in~\cite{GS} or~\cite{Sav}, for example.

\section{Proof of the Main Theorem}\label{sec:proof}

In this section, we establish Proposition~\ref{prop:surg-desc} below, constructing links $L_n$, the 0-surgeries on which yield $Y_n \# Z_n$, where $Y_n$ corresponds to the Seifert fibered spaces from Theorem~\ref{thm:deus} above.  Theorem~\ref{thm:main2} quickly follows.

\begin{proposition}\label{prop:surg-desc}
For each $n \geq 1$, there exists a 2-component link $L_n$ with the property that 0-surgery on $L_n$ is $Y_n \# Z_n$, where $H_1(Y_n) = H_1(Z_n) = \Z$, $Y_n = (S^2; -\frac{2}{1}, \frac{8n-1}{1}, \frac{16n-2}{8n-3})$, and $Z_n = (S^2;  -\frac{16n-2}{8n-3}, -\frac{8n-2}{1}, \frac{32n^2-12n+1}{16n^2-6n+1})$.
\end{proposition}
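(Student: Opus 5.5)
The plan is to build $L_n$ from a surgery diagram of $Y_n \# Z_n$ and simplify it by Kirby calculus, rather than searching for the link directly. First I place the standard Seifert diagrams of Figure~\ref{fig:seifert} for $Y_n$ and for $Z_n$ side by side, which gives a split diagram of $Y_n \# Z_n$. Each half is a $0$-framed central unknot with three framed meridians.

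\emph{The shared fiber.} The two summands contain singular fibers with opposite coefficients, $\frac{16n-2}{8n-3}$ in $Y_n$ and $-\frac{16n-2}{8n-3}$ in $Z_n$. This is presumably not an accident. I expect the intended construction to slide or band the $Z_n$-side central curve over the $Y_n$-side diagram near this matched pair. In the resulting diagram the two rationally framed meridians should become parallel with opposite slopes. A rational twist (a Rolfsen twist), combined with moves (2) and (3), should then let them be traded for integrally framed curves that cancel.

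\emph{Reducing to two $0$-framed components.} I then eliminate all non-central components. I expand each rational coefficient as a continued fraction into a chain of integer-framed unknots, and apply slam-dunks (move (3)) and merges (move (2)). The goal is to reach a diagram whose only remaining curves are two $0$-framed knots in a surgery picture of $S^3$, in the way Figure~\ref{fig:torus} exhibits torus knots as extra curves in $(S^2;\frac{p}{1},-\frac{p-1}{1})$ and $(S^2;\frac{2}{1},-\frac{r}{q})$. Concretely, I aim to group the remaining exceptional fibers into pairs $(p_1/q_1,p_2/q_2)$ with $p_1q_2+p_2q_1=\pm1$. Lemma~\ref{lem:S3} then says the ambient manifold is $S^3$, and the two central curves become a genuine link $L_n\subset S^3$.

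For the Euler numbers to balance, the coefficients of $Z_n$ (in particular $\frac{32n^2-12n+1}{16n^2-6n+1}$ and $-(8n-2)$) must be forced by this regrouping. So I would reverse-engineer the construction: start from two torus-knot pictures of $S^3$ and add curves until the $0$-surgeries are exactly $Y_n$ and $Z_n$.

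\emph{Verification.} The homology checks are routine. By Lemma~\ref{lem:alg-unlink} (or directly), the $0$-framings and vanishing linking number give $H_1=\Z^2$. Separately, $e(Y_n)=-\tfrac12+\tfrac{1}{8n-1}+\tfrac{8n-3}{16n-2}=0$, and $e(Z_n)=0$ is a similar identity, so each summand is a homology $S^1\times S^2$. One must also see explicitly that the resulting manifold is a connected sum, for instance by exhibiting a separating sphere that is visible before the final handle moves.

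The main obstacle is the central Kirby-calculus step: finding the specific slides and twists that merge the two Seifert diagrams, through the opposite $\pm\frac{16n-2}{8n-3}$ fibers, into a diagram of $S^3$ plus two $0$-framed curves. My first attempt would be to work with small $n$ (e.g.\ $n=1$, giving coefficients $-2, 7, \tfrac{14}{5}$ and $-\tfrac{14}{5}, -6, \tfrac{21}{11}$) and track continued-fraction chains by hand. I would then generalize the pattern to all $n$.
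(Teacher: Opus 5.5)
There is a genuine gap. The proposition is an existence statement, so its entire content is the construction of $L_n$, and your proposal never produces a link. The decisive step is phrased as expectations (``presumably'', ``should'') and then deferred to experiments with small $n$.

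The mechanism you sketch also fails as stated. Slides, Rolfsen twists and slam-dunks preserve the $3$-manifold, so no sequence of them makes the fibers $\pm\frac{16n-2}{8n-3}$ ``cancel''. If they did disappear, you would be left with a single Seifert diagram carrying $-2,\,8n-1,\,-(8n-2),\,\frac{32n^2-12n+1}{16n^2-6n+1}$. That is the irreducible Seifert fibered space $M_n$, not $Y_n\#Z_n$. The passage between these two manifolds must be an honest surgery on one component of $L_n$. Correspondingly, the components of $L_n$ are not ``the two central curves''.

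The missing idea is the paper's argument read backwards:
\begin{enumerate}
\item Write $\frac{16n-2}{8n-3}=2-1/\bigl((-2n+1)-\tfrac14\bigr)$. Replace this fiber of $Y_n$ by the integral chain $2,\,-2n+1,\,4$ hanging off its central curve, and replace the fiber $-\frac{16n-2}{8n-3}$ of $Z_n$ by the chain $-4,\,2n-1,\,-2$ (four reverse slam-dunks).
\item Join the ends $4$ and $-4$ by a $0$-framed unknot $U$, and add a $0$-framed meridian $J_n$ of $U$. By move (1) the diagram still gives $Y_n\#Z_n$.
\item Forget $J_n$. Four successive moves of type (2) band $4$ with $-4$ along $U$, then $-2n+1$ with $2n-1$, then $2$ with $-2$, then the two central curves. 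The result is a single $0$-framed central curve carrying the four fibers above, i.e.\ $M_n$.
\item Split that central curve by a reverse type-(2) move, regrouping the fibers as $\{8n-1,-(8n-2)\}$ and $\{-2,\frac{32n^2-12n+1}{16n^2-6n+1}\}$. Each pair gives $S^3$ by Lemma~\ref{lem:S3}.
\item The connecting curve is $K_n=-T_{8n-1,8n-2}\#T_{32n^2-12n+1,2}$. Its diagram framing $0$ is its Seifert framing because $(8n-1)(8n-2)=2(32n^2-12n+1)$, equivalently $e(M_n)=0$; this is also forced by Lemma~\ref{lem:alg-unlink}.
\end{enumerate}
Then $L_n=K_n\cup J_n$. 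Your instinct to pair fibers via Lemma~\ref{lem:S3} points the right way, but without the chain-plus-cancelling-pair step there is no construction.

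A smaller point: $e=0$ only gives $b_1=1$, not that each summand is a homology $S^1\times S^2$. Torsion-freeness of $H_1(Y_n)$ and $H_1(Z_n)$ follows because $H_1(Y_n\#Z_n)$ has rank $2$ and is a quotient of $\Z^2$.
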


\begin{proof}
First, consider the unlabeled (green) knot $K_n$ in Figure~\ref{fig:sumknot}.  Ignoring $K_n$, the corresponding 3-manifold is $S^3 \# S^3 = S^3$ by Lemma~\ref{lem:S3}.  In addition, $K_n$ can be obtained by taking the connected sum of the two knots in Figure~\ref{fig:torus} (with one mirrored), so that
\[ K_n = -T_{8n-1,8n-2} \# T_{32n^2-12n+1,2}.\]

\begin{figure}[h!]
    \centering
    \phantom{hi} \includegraphics[width=0.5\textwidth]{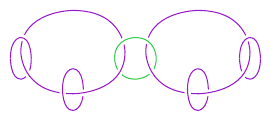}
        \put (-262,43) {$8n-1$}
        \put (-230,7) {$-(8n-2)$}
        \put (-176,100) {$0$}
        \put (-7,43) {$-2$}
        \put (-52,7) {$\frac{32n^2-12n+1}{16n^2-6n+1}$}
        \put (-65,100) {$0$}
\caption{The knot $K_n$ in $S^3$, unlabeled and shown in green.}
\label{fig:sumknot}
\end{figure}

Let $M_n$ denote the result of 0-surgery on $K_n$.  Then performing move (2), we see the surgery diagram for $M_n$ shown at left in Figure~\ref{fig:fourfibermassage}, from which it follows that $M_n = (S^2; -\frac{2}{1}, \frac{8n-1}{1}, -\frac{8n-2}{1}, \frac{32n^2-12n+1}{16n^2-6n+1})$.  The framed link at left is isotopic to the framed link at right in Figure~\ref{fig:fourfibermassage}, which is equivalent to the framed link in Figure~\ref{fig:chain} via a sequence of four consecutive moves of type (2).

\begin{figure}[h!]
    \centering
\includegraphics[width=0.35\textwidth]{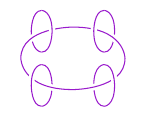} \qquad
    \includegraphics[width=0.35\textwidth]{figures/fourfiber1.pdf}
        \put (-358,100) {$8n-1$}
        \put (-368,15) {$-(8n-2)$}
        \put (-275,106) {$0$}
        \put (-222,100) {$-2$}
        \put (-224,15) {$\frac{32n^2-12n+1}{16n^2-6n+1}$}
        \put (-164,100) {$8n-1$}
        \put (-146,15) {$-2$}
        \put (-32,100) {$-(8n-2)$}
        \put (-32,15) {$\frac{32n^2-12n+1}{16n^2-6n+1}$}
        \put (-83,106) {$0$}
\caption{Isotopic surgery diagrams for $M_n$.}
\label{fig:fourfibermassage}
\end{figure}

\begin{figure}[h!]
    \centering
    \includegraphics[width=0.8\textwidth]{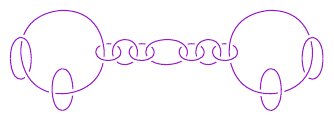}
        \put (-398,70) {$8n-1$}
        \put (-331,10) {$-2$}
        \put (-308,130) {$0$}
        \put (-255,56) {$2$}
        \put (-260,95) {$-2n+1$}
        \put (-218,56) {$4$}
        \put (-190,95) {$0$}
        \put (-170,56) {$-4$}
        \put (-158,95) {$2n-1$}
        \put (-133,56) {$-2$}
        \put (-9,70) {$-(8n-2)$}
        \put (-57,10) {$\frac{32n^2-12n+1}{16n^2-6n+1}$}
        \put (-71,130) {$0$}
\caption{Another surgery diagram for $M_n$.  Four moves of type (2) converts this link to the one in Figure~\ref{fig:fourfibermassage}.}
\label{fig:chain}
\end{figure}

Let $J_n$ denote the knot in $M_n$ shown in Figure~\ref{fig:jn}.  We may isotope $J_n$ in $M_n$ to miss the core curve from the surgery on $K_n$, in which case we can view $L_n = K_n \cup J_n$ as a 2-component link in $S^3$.  Let $N_n$ denote the result of 0-surgery on $J_n$ in $M_n$ (that is, $N_n$ is the result of 0-surgery on $L_n$ in $S^3$).  A surgery diagram for $N_n$ is obtained by giving $J_n$ the 0-framing, and a move of type (1) converts this surgery diagram to the one at the top of Figure~\ref{fig:surgeryJn}, which four moves of type (3) convert to the surgery diagram at bottom, where the new framings are verified by the computation below (and its opposite):
\[ 2 - \frac{1}{(-2n+1)-\frac{1}{4}} = 2 + \frac{4}{8n-3} = \frac{16n-2}{8n-3}.\]
We conclude that
\[ N_n = \left(S^2; -\frac{2}{1}, \frac{8n-1}{1}, \frac{16n-2}{8n-3}\right) \# \left(S^2;  -\frac{16n-2}{8n-3}, -\frac{8n-2}{1}, \frac{32n^2-12n+1}{16n^2-6n+1}\right) = Y_n \# Z_n,\]
as desired.
\end{proof}

\begin{figure}[h!]
    \centering
    \includegraphics[width=0.8\textwidth]{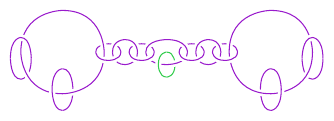}
        \put (-398,70) {$8n-1$}
        \put (-331,10) {$-2$}
        \put (-308,130) {$0$}
        \put (-255,56) {$2$}
        \put (-260,95) {$-2n+1$}
        \put (-218,56) {$4$}
        \put (-190,95) {$0$}
        \put (-170,56) {$-4$}
        \put (-158,95) {$2n-1$}
        \put (-133,56) {$-2$}
        \put (-9,70) {$-(8n-2)$}
        \put (-57,10) {$\frac{32n^2-12n+1}{16n^2-6n+1}$}
        \put (-71,130) {$0$}
\caption{The knot $J_n$ in $M_n$.}
\label{fig:jn}
\end{figure}

\begin{figure}[h!]
    \centering
    \includegraphics[width=0.8\textwidth]{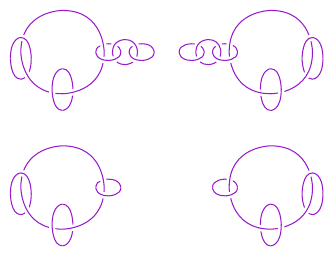}
        \put (-400,215) {$8n-1$}
        \put (-334,170) {$-2$}
        \put (-307,283) {$0$}
        \put (-255,210) {$2$}
        \put (-256,248) {$-2n+1$}
        \put (-216,210) {$4$}
        \put (-173,210) {$-4$}
        \put (-157,248) {$2n-1$}
        \put (-133,210) {$-2$}
        \put (-10,215) {$-(8n-2)$}
        \put (-55,170) {$\frac{32n^2-12n+1}{16n^2-6n+1}$}
        \put (-70,283) {$0$}
        \put (-400,62) {$8n-1$}
        \put (-334,17) {$-2$}
        \put (-307,130) {$0$}
        \put (-242,60) {$\frac{16n-2}{8n-3}$}
        \put (-170,60) {$-\frac{16n-2}{8n-3}$}      
        \put (-10,62) {$-(8n-2)$}
        \put (-55,17) {$\frac{32n^2-12n+1}{16n^2-6n+1}$}
        \put (-70,130) {$0$}
       \caption{Equivalent surgery diagrams for 0-surgery on $J_n$ in $M_n$.}
\label{fig:surgeryJn}
\end{figure}

\begin{proof}[Proof of Theorem~\ref{thm:main2}]
Let $L_n$ be the link defined in Proposition~\ref{prop:surg-desc}.  Then the result of 0-surgery on $L_n$ is $Y_n \# Z_n$.  If $L_n$ is weakly handleslide equivalent to a split link, then each of $Y_n$ and $Z_n$ can be obtained by surgery on a knot in $S^3$, contradicting Theorem~\ref{thm:deus}.
\end{proof}

\section{Another perspective on the links $L_n$}\label{sec:proof2}

In this section, we offer insights about the connections between the links $L_n$ and the constructions in~\cite{GST} and~\cite{MeZ} for further topological context.  Note that in our construction, the knot $K_n$, as the connected sum of two fibered knots, is itself fibered.  Scharlemann and Thompson placed strong restrictions on 2-component links $L = K \cup J$ with reducible surgeries in the case that one component is fibered.  The following proposition is a direct consequence of Corollary 4.2 of~\cite{ST} (see also Theorem 3.3 of~\cite{GST} and Lemma 3.10 of~\cite{MeZ}):

\begin{proposition}\label{prop:fiberlay}
Suppose $L = K \cup J$ is a 2-component framed link such that $K$ is fibered and surgery on $L$ yields $Y \# Z$, a connected sum of homology $S^1 \times S^2$'s.  Then either $L$ is handleslide equivalent to a split link, or there is a sequence of handle-slides of $J$ over $K$ which takes $L$ to a link $L' = K \cup J'$ such that $J'$ lies in a fiber of $K$ with the zero-framing induced by the fiber.
\end{proposition}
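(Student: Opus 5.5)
The plan is to deduce this from Corollary 4.2 of \cite{ST}, using the fiber structure of $K$ together with the homological constraints of Lemma~\ref{lem:alg-unlink}. By Lemma~\ref{lem:alg-unlink}, both components of $L$ are $0$-framed and $\mathrm{lk}(K,J)=0$. Let $M = S^3_0(K)$. Since $K$ is fibered, $M$ fibers over $S^1$ with fiber $\widehat F$, the Seifert surface $F$ capped off by a meridian disk of the surgery solid torus. The curve $J$ then sits as a knot in $M$. Surgery on $J\subset M$ with its framing yields $Y\#Z$, which is reducible. It is also not $S^1\times S^2$ unless we are already in a degenerate case.

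We next arrange the hypotheses of \cite{ST}. Scharlemann--Thompson study a knot $J$ in a surface bundle $M$ over $S^1$ such that some surgery on $J$ produces a reducible manifold, or one with an $S^1\times S^2$ summand. Their Corollary 4.2 gives the following dichotomy. Either $J$ is cobordant to a curve in a fiber, and after isotopy in $M$ the curve $J$ lies in a fiber $\widehat F$ with the surgery slope equal to the fiber framing. Or the reducing sphere comes from a trivial situation: $J$ lies in a ball, or it is a split or cabled case.

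Two checks are needed before applying this.
\begin{enumerate}
\item The hypotheses hold. The manifold $Y\#Z$ has $H_1=\Z^2$, while $M$ has $H_1=\Z$. Since $\mathrm{lk}(K,J)=0$, the curve $J$ is null-homologous in $M$, so it is homologically trivial as the corollary requires.
\item Isotopy of $J$ in $M$ across the core of the surgery solid torus corresponds exactly to handleslides of $J$ over $K$ in $S^3$. Isotopy in the complement of that core is isotopy of $L$ in $S^3$.
\end{enumerate}
Thus the fiber conclusion translates to: after handleslides of $J$ over $K$ we get $L' = K\cup J'$ with $J'$ in a fiber of $K$. One can isotope $J'$ off the capping disk, so it lies in $F$ itself, and its framing is the one induced by the fiber.

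In the alternative case, $J$ can be isotoped in $M$, equivalently slid over $K$, into a ball in $M$. Such a ball may be taken disjoint from the core of the surgery solid torus, so $J$ becomes split from $K$ in $S^3$. This gives handleslide equivalence to a split link.

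The main obstacle is matching the exact statement of Corollary 4.2 of \cite{ST} with our setting. The exceptional cases there need to be sorted carefully. A possible degenerate outcome is that the surgery on $J$ produces $M\#(S^1\times S^2)$ or $M\# L(p,q)$. These would be excluded by homology, since $Y\#Z$ has no lens space summand and $H_1$ has rank two. The cited Theorem 3.3 of \cite{GST} and Lemma 3.10 of \cite{MeZ} carry out exactly this translation, and I would follow their argument.
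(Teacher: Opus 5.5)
Your proposal follows the paper's proof: invoke Corollary~4.2 of \cite{ST}, use Lemma~\ref{lem:alg-unlink} (0-framings, $\mathrm{lk}(K,J)=0$, torsion-free $H_1$) to discard the extraneous conclusions, and translate isotopy of $J$ in $M$ across the surgery core into handleslides over $K$, with the ball case yielding a split link. One correction: $H_1(M\#(S^1\times S^2))\cong\Z^2$, so homology does not exclude that outcome; it is simply an instance of the ball alternative, which you already handle as the split case.
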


\begin{proof}
Such a link satisfies the hypotheses of Corollary 4.2 of~\cite{ST}, and of the four resulting conclusions, only two are possible based on the algebraic topology asserted in Lemma~\ref{lem:alg-unlink}.  Let $M$ denote the result of surgery on the fibered component $K$.  Then either $J$ (viewed as a knot in $M$) is isotopic into a 3-ball, in which case $L$ is handleslide equivalent to a split link, or $J$ is isotopic into a (closed) fiber of $M$, with surface framing equal to the framing of $J$.  It follows that as a link in $S^3$, the knot $J$ can be pushed onto the fiber of $K$ modulo possible handleslides over $K$.
\end{proof}

Now, consider the (fibered) torus knot $T_{p,q}$.  An explicit description of the fibration of $S^3 \setminus \eta(T_{p,q})$ is given in~\cite{MeZ}:  The fiber $F_{p,q}$ is a genus $(p-1)(q-1)/2$ surface with one boundary component.  Let $A_{p,q}$ be a $2pq$-gon with an open disk removed, so that $A_{p,q}$ is a topological annulus.  With appropriate edge identifications, the fiber $F_{p,q}$ is obtained naturally as a quotient of $A_{p,q}$, and the monodromy $\varphi_{p,q}$ is induced by a clockwise $2\pi/pq$-rotation of $A_{p,q}$.  Typically, the monodromy of a knot complement is chosen to fix the boundary of the fiber.  However, in this case $S^3 \setminus \eta(T_{p,q})$ is Seifert fibered over the disk $D^2_{p,q}$, and if $\rho_{p,q}$ is the natural projection map $\rho_{p,q} : F_{p,q} \rightarrow D^2_{p,q}$, then our choice of $\varphi_{p,q}$ (which rotates the boundary of $F_{p,q}$) is equivariant in the sense that $\rho_{p,q} = \rho_{p,q} \circ \varphi_{p,q}$.

Recall that $K_n = -T_{8n-1,8n-2} \# T_{32n^2-12n+1,2}$.  From the proof of Proposition~\ref{prop:surg-desc}, we have that the manifold $M$ obtained by 0-surgery on $K_n$ is a Seifert fibered space over $S^2$ with four singular fibers.  Notably, $(8n-1)(8n-2) = 2(32n^2-12n+1)$, so both the monodromies $\varphi_{8n-1,8n-2}$ and $-\varphi_{32n^2-12n+1,2}$ induce rotations of the same magnitude on their respective fiber surfaces.  It follows that the fiber $\widehat F$ for $M_n$ is obtained by gluing the two knot fibers, the two monodromies can be pasted together to get the (closed) monodromy $\widehat \varphi$ for $M_n$, and the base space $S$ for the Seifert fibration of $M_n$ is obtained by gluing the base spaces for the two knot summands.  By construction, if $\rho: M_n \rightarrow S$ is the quotient to the base orbifold, the monodromy $\widehat \varphi$ satisfies $\rho = \rho \circ \widehat \varphi$.

Since the surface $\wh F$ is horizontal with respect to the Seifert fibration of $M_n$, the restriction $\rho|_{\wh F}:\wh F \rightarrow S$ is a branched covering map, whose branch points coincide with the orbifold points (with multiplicities) of $S$.  We invoke a useful lemma about branched coverings, whose proof is left to the reader.  The interested reader is encouraged to consult~\cite{scott}, for instance, regarding branched coverings and Seifert fibered spaces.

\begin{lemma}\label{lem:branch}
Suppose $F_+$ is a connected, compact surface, $S_+$ is a disk, and $p:F_+ \rightarrow S_+$ is a branched covering with two branch points of orders $a$ and $b$.  Then $F_+$ has $\gcd(a,b)$ boundary components.
\end{lemma}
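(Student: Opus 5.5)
We will prove Lemma~\ref{lem:branch} by reducing to a count of orbits of a monodromy action.
Let $d$ denote the degree of $p$, and choose the two branch points $x_a, x_b$ in the interior of $S_+$. Remove a small open disk around each of them. The resulting planar surface $P \subset S_+$ is a pair of pants. Its fundamental group is free on two loops $\alpha$ and $\beta$, which encircle $x_a$ and $x_b$ respectively, and the product $\alpha\beta$ is homotopic to $\partial S_+$. Over $P$ the map $p$ is an honest $d$-sheeted covering, so it is described by a monodromy homomorphism $\mu : \pi_1(P) \to \mathrm{Sym}(d)$. Connectedness of $F_+$ means the image acts transitively. The components of $p^{-1}(\partial S_+)$ correspond to the orbits of $\langle \mu(\alpha\beta)\rangle$, and these components are exactly the boundary components of $F_+$. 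So the whole problem is to count the cycles of $\mu(\alpha)\mu(\beta)$.

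Next we interpret the hypothesis "branch points of orders $a$ and $b$." In the setting where the lemma is used, $\widehat F$ is horizontal in a Seifert fibration and the cover is regular, namely the quotient by the cyclic group generated by the monodromy $\widehat\varphi$. Accordingly, we take the hypothesis to mean that every preimage of $x_a$ has local degree $a$, and every preimage of $x_b$ has local degree $b$. That is, $\mu(\alpha)$ is a product of disjoint $a$-cycles and $\mu(\beta)$ is a product of disjoint $b$-cycles.

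The cleanest route is to assume regularity, as in the application, where the deck group is cyclic $\Z/d$. Then $\mu$ factors through a surjection $\pi_1(P) \to \Z/d$ with $\alpha \mapsto u$ and $\beta \mapsto v$. Since every point over $x_a$ has local degree $a$, the element $u$ has order $a$, and likewise $v$ has order $b$. Surjectivity gives $\langle u, v\rangle = \Z/d$, which forces $d = \mathrm{lcm}(a,b)$: the order of $u$ is $a$, the order of $v$ is $b$, and they generate. The boundary $\partial S_+$ corresponds to $u+v$, and the number of boundary components of $F_+$ is the index $d/\mathrm{ord}(u+v)$.

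Now we compute $\mathrm{ord}(u+v)$ one prime at a time. For each prime, with $\ell$-adic valuations $\alpha_\ell = v_\ell(a)$ and $\beta_\ell = v_\ell(b)$, there are two cases.
\begin{itemize}
\item If $\alpha_\ell \neq \beta_\ell$, then the $\ell$-part of $u+v$ has order $\ell^{\max}$.
\item If $\alpha_\ell = \beta_\ell$, the order could drop. Here we need the extra input that $\langle u, v\rangle$ is cyclic with both summands generating the same $\ell$-part, and cancellation is possible.
\end{itemize}
So in general we only get $\mathrm{ord}(u+v) \mid \mathrm{lcm}(a,b)$.

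This shows that the count is not forced to be $\gcd(a,b)$ without more input. The missing input is the Seifert-fibration setting, where the lift of the boundary fiber gives $u+v$ the correct order. For example, in the torus-knot halves, the rotation of the boundary of $A_{p,q}$ by $2\pi/pq$ shows that each boundary circle maps with degree $\mathrm{lcm}$-compatible multiplicity. I expect this reconciliation to be the main obstacle: showing that the boundary monodromy has exactly $\gcd(a,b)$ orbits. Euler characteristic gives a consistency check. Riemann–Hurwitz yields $\chi(F_+) = d(1 - 1 + 1/a + 1/b - 1) + \ldots$, and comparing this with the genus of the torus-knot fibers, $(a-1)(b-1)/2$ with one boundary circle when $\gcd = 1$, confirms the count in the coprime case. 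The general case would then follow by noting that $F_+$ is a $\gcd(a,b)$-fold disjoint-boundary assembly of such pieces.
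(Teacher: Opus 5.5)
Your reduction is correct as far as it goes. In the regular cyclic case, the number of boundary circles of $F_+$ is $\mathrm{lcm}(a,b)/\mathrm{ord}(u+v)$. But the proposal never establishes $\gcd(a,b)$ boundary circles when $\gcd(a,b)>1$. The ``reconciliation'' via the Seifert fibration and the ``$\gcd(a,b)$-fold assembly'' are not arguments. The Riemann--Hurwitz check determines only $2g+k$, where $k$ is the number of boundary circles, and not $k$ itself. More seriously, no such argument can exist, because the statement is false in that generality, and your own first bullet shows it.

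Take the square torus $\mathbb{C}/\Z[i]$ with the order-$4$ rotation $z\mapsto iz$, and delete a small invariant disk about the fixed point $0$. The result $F_+$ is a once-punctured torus. Its quotient $S_+$ is a disk with a cone point of order $4$ (the image of $(1+i)/2$) and one of order $2$ (the image of the orbit $\{1/2,i/2\}$). In $\Z/4$ we have $u=\pm1$ and $v=2$, so $u+v$ has order $4$, and $F_+$ has one boundary circle although $\gcd(4,2)=2$. This $F_+$ is a horizontal surface in the Seifert fibered mapping torus of the rotation, so the Seifert setting supplies no extra input.

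Even when valuations agree, the count depends on $u,v$ and not only on $a,b$. In $\Z/42$ with $(a,b)=(6,21)$, the choice $(u,v)=(7,2)$ gives three boundary circles, while $(u,v)=(7,4)$ gives one.

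What you can prove is the coprime case for cyclic regular covers, and that is all the paper uses: it leaves the proof to the reader and applies the lemma only with $\gcd(2,8n-1)=1$. There your first bullet finishes the proof immediately. If $\gcd(a,b)=1$, then $u+v$ has order $ab=d$, so $\mu(\alpha\beta)$ is a single $d$-cycle and $F_+$ has exactly one boundary circle.

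The cyclic hypothesis is justified in the application. The fibers of $\rho|_{\widehat F}$ are exactly the $\langle\widehat\varphi\rangle$-orbits, so a component $F_+$ of $(\rho|_{\widehat F})^{-1}(S_+)$ is a regular cover whose deck group is its stabilizer, which is cyclic. This hypothesis cannot be dropped even for coprime orders. The regular $S_3$-cover of the disk with $\alpha\mapsto(12)$ and $\beta\mapsto(123)$ has $\alpha\beta\mapsto(23)$. It is a pair of pants with three boundary circles over cone points of orders $2$ and $3$. So you should restate the lemma with these hypotheses (cyclic regular cover, coprime orders, conclusion of one boundary circle) rather than try to prove the $\gcd$ formula.
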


With this setup, we give a second proof of Proposition~\ref{prop:surg-desc}.

\begin{proof}[Alternate proof of Proposition~\ref{prop:surg-desc}]
Choose a curve $\lambda$ in $S$ that separates the cone points of orders $2$ and $8n-1$ from the cone points of orders $8n-2$ and $32n^2-12n+1$, as shown in Figure~\ref{fig:pillow}, and let $\Lambda = \rho^{-1}(\lambda)$, so that $\Lambda$ is an essential vertical torus in $M_n$, the manifold obtained by 0-surgery on $K_n$.  Let $J_n$ be any curve in the intersection $\Lambda \cap \widehat F = \rho_{\wh F}^{-1}(\lambda)$, and endow $J_n$ with the framing determined by $\Lambda$ (which coincides with the framing determined by $\wh F$).  Performing surgery on $J_{n}$ with this framing corresponds to cutting along $\Lambda$, attaching two $3$-dimensional $2$-handles along the respective copies of $J_{n}$ in the boundary, and re-identifying the resulting boundary components.  (See, for example, the ``Surgery Principle" of \cite{GST}.)  In this way, we can see that the result $N_n$ of surgery on $J_n$ in $M_n$ can be expressed as a connected sum $Y_n \# Z_n$, and we seek to identify these summands.  %$N = Y \# Z$, where $Y$ and $Z$ are Dehn fillings of since the result of attaching a 2-handle to $\Lambda$ is a 2-sphere, namely the decomposing sphere in the connected sum.  We note, however, that $N$ may also be obtained by cutting along $\Lambda$, attaching a solid torus to each boundary component, removing a $3$-ball from each solid torus, and identifying the resulting spherical boundary components.

\begin{figure}[h!]
    \centering
    \phantom{hi} \includegraphics[width=0.3\textwidth]{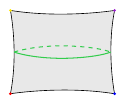}
        \put (-168,100) {$8n-1$}
        \put (-168,10) {$8n-2$}
        \put (-3,100) {$2$}
        \put (-3,10) {$32n^2-12n+1$}
\caption{The curve $\lambda$ in $S$ separates cone points as specified.}
\label{fig:pillow}
\end{figure}

Cutting $M_n$ open along $\Lambda$ respects the Seifert fibration, yielding two Seifert fibered spaces, both of which fiber over $D^2$ and which have exceptional fibers of orders $2$ and $8n-1$ or $8n-2$ and $32n^2-12n+1$, respectively.  Recall that $M_n = (S^2; -\frac{2}{1}, \frac{8n-1}{1}, -\frac{8n-2}{1}, \frac{32n^2-12n+1}{16n^2-6n+1})$.  Therefore, we see that $Y_n = (S^2; -\frac{2}{1},\frac{8n-1}{1},\frac{p}{q})$ and $Z_n = (S^2; \frac{r}{s},-\frac{8n-2}{1},\frac{32n^2-12n+1}{16n^2-6n+1})$.  Let $S_+$ be the closure of $S \setminus \lambda$ containing the orbifold points of orders $8n-1$ and 2, and let $F_+$ be any component of $\rho_{\wh F}^{-1}(S_+)$.  By Lemma~\ref{lem:branch}, we have that $F_+$ has $\gcd(8n-1,2) = 1$ boundary component.  We conclude that $J_n$ is separating in $\wh F$.  It follows that $J_n$ is nullhomologous in $M_n$ via a subsurface of $\widehat F$, and the surface framing of $J_n$ agrees with the 0-framing.

%\textcolor{red}{Consider all curves of intersection of $\Lambda$ with $\widehat F$.  Noting that $8n-1$ and $2$ are relatively prime, while the gcd of $8n-2$ and $32n^2 - 12n+1$ is $4n-1$, it follows that $\Lambda \cap \widehat F$ cuts $\widehat F$ into $4n$ pieces, $4n-1$ of which have one boundary component (on the side of $Y$), and one of which has $4n-1$ boundary components (on the side of $Z$).  This implies that $J_n$ separate $\widehat F$, so that $J_n$ is nullhomologous in $M$ via a subsurface of $\widehat F$, and the surface framing of $J_n$ agrees with the 0-framing.}

As $K_n$ is also 0-framed and $J_n$ is contained in a fiber, the linking number of $K_n$ and $J_n$ is zero.  We conclude that both $Y_n$ and $Z_n$ are homology $S^1 \times S^2$'s, and in particular, both have Euler number zero.  By~\cite{Mart}, we can compute $\frac{p}{q} = \frac{16n-2}{8n-3}$ and $\frac{r}{s} = -\frac{16n-2}{8n-3}$, completing the proof.
\end{proof}

In addition to connecting with the references~\cite{GST} and~\cite{MeZ} and providing the perspective with which we first conceived of our construction, the alternative proof also yields a concrete description of the links $L_n$.  In the case $n=1$, we have $K_1 = -T_{7,6} \# T_{21,2}$.  The closed fiber $\widehat F$ is genus-25 surface which can be constructed as the quotient of an annulus with two 84-gon boundary components, the monodromy $\varphi$ is a $2\pi/42$ rotation, and a curve $\lambda$ in the base space $S$ separating the cone points in the prescribed way lifts to three separating curves cutting $\widehat F$ into four components.  One of these lifts is shown in Figure~\ref{fig:crazy} below.  Following the procedures outlined in~\cite{MeZ} (building on work of Scharlemann in~\cite{Scharl}), one could conceivably draw $J_1$ on a fiber of $K_1$ in $S^3$, but we expect a realization of this link (even in the simplest case of $n=1$) to have thousands of crossings, so we omit an explicit picture here.

\begin{figure}[h!]
    \centering
    \phantom{hi} \includegraphics[width=1\textwidth]{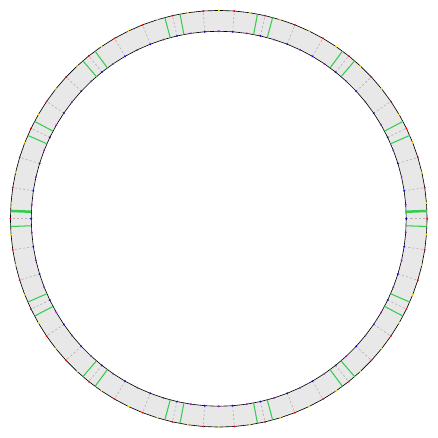}
\caption{A lift of $\lambda$ to $\widehat F$.  Here $\widehat F$ is the quotient of the annulus via edge identifications.  Around both boundary curves, starting with the thickened green arc on the right, the edges are labeled $(i,j)^+$, $(i,j+1)^-$, $(i+1,j+1)^+$, continuing this pattern proceeding counterclockwise.  On the outer boundary, the index $i$ is taken mod 7 and the index $j$ is taken mod 6.  On the inner boundary, the index $i$ is taken mod 2 and the index $j$ is taken mod 21.  For instance, the two thickened green segments are glued along their inner boundaries.  This figure generalizes the construction shown in~\cite[Figure 6]{MeZ}.  Note that the 42 fundamental domains of the covering from $\widehat F$ to $S$ are delineated by dotted lines, and the vertices are color-coded to indicate the preimages of the cone points in Figure~\ref{fig:pillow}.}
\label{fig:crazy}
\end{figure}

\bibliographystyle{alpha}
\bibliography{references.bib}

\end{document}